\newtheorem{thm}{Theorem}
\newtheorem{lemma}[thm]{Lemma}
\newtheorem{cor}[thm]{Corollary}
\newtheorem{prop}[thm]{Proposition}
\newtheorem{definition}[thm]{Definition}
\newcommand{\R}{\mathbb{R}}
\newcommand{\dd}{\partial}
\newcommand{\e}{\varepsilon}
\newcommand{\lap}{\Delta}
\renewcommand{\div}{\operatorname{div}}
\DeclareMathOperator*{\osc}{osc}
\begin{document}
\title{Regularity of the $p$-Poisson equation in the plane}
\author{Erik Lindgren\footnote{erik.lindgren@math.ntnu.no} \qquad Peter Lindqvist\footnote{lqvist@math.ntnu.no}}
\date{Department of Mathematical Sciences\\Norwegian University of Science and Technology\\NO-7491
  Trondheim, Norway}
\maketitle
\begin{abstract}\noindent 
 \textsf{We study the regularity of the $p$-Poisson equation
 $$ 
 \lap_p u = h, \quad h\in L^q
 $$ in the plane. In the case $p>2$ and $2<q<\infty$ we obtain the sharp H\"older exponent for the \emph{gradient}. In the other cases we come arbitrarily close to the sharp exponent.} 
      \end{abstract}
\noindent {\bf   AMS classification:} 35J15, 35J60, 35J70  \\
\noindent {\bf Keywords:} non-linear equation, quasiregular mapping, variational problem\\
\section{Introduction} In the plane, the theory of many partial differential equations is more explicit than in higher dimensions. Sometimes the theory of quasiregular mappings and other devices are available, see \cite{Ber58}. Our object is to study the so-called $p$-Poisson equation
\begin{equation}\label{eq:ppoission}
\lap_p v\,(x,y)\equiv \div(|\nabla v(x,y)|^{p-2}\nabla v(x,y)) = h(x,y), 
\end{equation}
in a bounded domain $\Omega\subset \R^2$, when $1<p<\infty$. This equation arises as the Euler-Lagrange equation of the variational integral 
$$
\iint_\Omega \left(\frac1p |\nabla v|^p +hv\right) dx dy.
$$
The weak solutions are known to be of class $C^{1,\kappa}_\text{loc}$ for some $\kappa\in (0,1)$. We are interested in \emph{the sharp H\"older exponent for the gradient} of the solution. We record a well known result:
\begin{prop} \label{prop:c1kappa} Suppose that $v$ is a solution of \eqref{eq:ppoission} in the disc $B_{2R}$ and that $h\in L^q(B_{2R})$ for some fixed $2<q\leq\infty$. Then  $v\in C^{1,\kappa}_\text{loc}(B_{2R})$, for some $\kappa=\kappa(p,q)$. We have the estimate
$$
\| v\|_{C^{1,\kappa}(B_R)}\leq A, 
$$
where $A=A(p,q,R,\|h\|_{L^q(B_{2R})}, \|v\|_{L^\infty(B_{2R})})$.
\end{prop}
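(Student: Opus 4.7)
\emph{Strategy.} The plan is to combine the classical interior $C^{1,\alpha}$ regularity for $p$-harmonic functions (Uraltseva, Uhlenbeck, Evans, Lewis, DiBenedetto, Tolksdorf) with a Campanato-type perturbation argument that treats the right-hand side $h$ as a lower-order term. Since the exponent $q>2$ exceeds the planar dimension, $h$ has just enough integrability to yield a genuine power gain in the radius, which is what enables the perturbation to work. As a preliminary step I would extract local $L^\infty$ and $W^{1,p}$ bounds on $v$ from \eqref{eq:ppoission} by the standard Caccioppoli and Moser iteration arguments, testing against $v\eta^p$ and against truncations of $v$; the inhomogeneous term $\int h\varphi$ is absorbed via H\"older and Sobolev embedding, where $q>2$ is again crucial.

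\emph{Comparison with the $p$-harmonic replacement.} Fix $B_r=B_r(x_0)\Subset B_{3R/2}$ and let $w\in v+W^{1,p}_0(B_r)$ be the $p$-harmonic function agreeing with $v$ on $\partial B_r$. Subtracting the two weak formulations, testing with $v-w$, and invoking the classical vector monotonicity of $\xi\mapsto|\xi|^{p-2}\xi$ yields
\begin{equation*}
 \int_{B_r} (|\nabla v|+|\nabla w|)^{p-2}|\nabla v -\nabla w|^2\dif x\dif y \le C\int_{B_r}|h||v-w|\dif x\dif y.
\end{equation*}
Combining H\"older, Sobolev--Poincar\'e, and the assumption $q>2$ produces a comparison estimate of the shape
\begin{equation*}
 \int_{B_r}|\nabla v-\nabla w|^p\dif x\dif y \le C\,r^{2+\delta}
\end{equation*}
for some $\delta=\delta(p,q)>0$. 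By the homogeneous regularity theory, $\nabla w\in C^{0,\alpha_0}_\text{loc}(B_r)$ with a Campanato-type decay
\begin{equation*}
 \inf_{A\in\R^2}\int_{B_\rho}|\nabla w - A|^2\dif x\dif y \le C\bigl(\rho/r\bigr)^{2+2\alpha_0}\int_{B_r}|\nabla w|^2\dif x\dif y.
\end{equation*}

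\emph{Iteration.} Setting $\Psi_v(\rho):=\inf_{A\in\R^2}\int_{B_\rho}|\nabla v - A|^2$ and combining the previous two displays via the triangle inequality gives a decay inequality of the form
\begin{equation*}
 \Psi_v(\rho)\le C\bigl[(\rho/r)^{2+2\alpha_0}\,\Psi_v(r)+r^{2+\delta'}\bigr].
\end{equation*}
A standard iteration lemma then yields $\Psi_v(\rho)\le C\rho^{2+2\kappa}$ for some $\kappa\in(0,1)$, and Campanato's characterization upgrades this to local H\"older continuity of $\nabla v$ with exponent $\kappa$. Bookkeeping the constants throughout delivers the quantitative dependence of $A$ on $p$, $q$, $R$, $\|h\|_{L^q(B_{2R})}$, and $\|v\|_{L^\infty(B_{2R})}$.

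\emph{Main obstacle.} The technical heart is the comparison estimate: the degeneracy ($p>2$) or singularity ($1<p<2$) of $|\xi|^{p-2}\xi$ forces a case split and a careful tracking of the weight $(|\nabla v|+|\nabla w|)^{p-2}$, which leans on the local $L^\infty$ bound for $\nabla w$ provided by the homogeneous theory but only on the $L^p$ bound for $\nabla v$ from the a priori step. Verifying that the resulting exponent $\delta$ is strictly positive is precisely where the hypothesis $q>2$ is used in an essential way.
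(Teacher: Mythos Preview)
The paper does not supply its own proof of this proposition: immediately after the statement the authors write that the proof ``can for $q=\infty$ be found in \cite{Tol84} and for $2<q<\infty$ in \cite{Lie93}'' and point also to \cite{DiB83}. Proposition~\ref{prop:c1kappa} is quoted as background, not established in the paper.

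Your outline is a reasonable sketch of the standard perturbation route that underlies those references: compare $v$ on each small ball with its $p$-harmonic replacement $w$, use the monotonicity of $\xi\mapsto|\xi|^{p-2}\xi$ together with $q>2=n$ to bound $\nabla v-\nabla w$ with a strict power gain in the radius, feed the known $C^{1,\alpha_0}$ excess decay for $w$ into a Campanato iteration, and read off H\"older continuity of $\nabla v$. This is essentially what is carried out (with full detail and the case split $p\gtrless 2$ that you flag) in the cited papers. If you want to turn the sketch into a self-contained proof, the two places requiring care are exactly the ones you anticipate: in the singular range $1<p<2$ the weighted monotonicity inequality must be converted to a genuine $L^p$ (or $L^2$) bound on $\nabla v-\nabla w$ without losing the factor $r^\delta$; and the excess decay you write for $\nabla w$ should carry the oscillation $\int_{B_r}|\nabla w-(\nabla w)_r|^2$ on the right (or be derived from the local $L^\infty$ bound on $\nabla w$) rather than $\int_{B_r}|\nabla w|^2$, so that the iteration actually closes on $\Psi_v$.
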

Here and in the rest of the paper, we use the notation
$$
[u]_{C^s(D)}=\left\|\frac{u(x)-u(y)}{|x-y|^s}\right\|_{L^\infty(D\times D)}, \quad \|u\|_{C^s(D)}=[u]_{C^s(D)}+\|u\|_{L^\infty(D)}
$$
and
$$
\|u\|_{C^{1,s}(D)}=\|\nabla u\|_{C^s(D)}+\|u\|_{L^\infty(D)}
$$
when $s\in (0,1)$ and $D$ is a bounded domain. The proof of the above theorem can for $q=\infty$ be found in \cite{Tol84} and for $2<q<\infty$ in \cite{Lie93}. See also the corollary on page 830 in \cite{DiB83}.

In the homogeneous case, $\lap_p v = 0$, the optimal H\"older exponent 
$$
\kappa = \frac16\left(\frac{p}{p-1}+\sqrt{1+\frac{14}{p-1}+\frac{1}{(p-1)^2}}\right)>\min\left(p-1,\frac{1}{p-1}\right),\quad (p\neq 2)
$$
has been determined by Iwaniec and Manfredi in \cite{IM89}. They used the hodograph transform. However, the ``torsional creep equation''
$$
\lap_p v = 2,
$$
studied for example in \cite{Kaw90}, has a weak solution given by 
$$
v(x)=\frac{p-1}{p}|x|^\frac{p}{p-1}, 
$$
so that $|\nabla v(x)|=|x|^\frac{1}{p-1}$, exhibiting the fact that, in general one must have $\kappa\leq \frac{1}{p-1}$ if $q=\infty$. The example
$$
v(x)=\int_0^{|x|}\left(\frac{\rho^{1-\frac2q}}{(\ln\rho)^\frac{2}{q}}\right)^\frac{1}{p-1} d\rho
$$
solves the $p$-Poisson equation with the right-hand side in $L^q$, showing that $\kappa\leq \frac{1-\frac{2}{q}}{p-1}$ when $2<q<\infty$. 

The exponent $\beta$ defined as follows plays a crucial role:
\begin{definition}\label{det:beta}
\noindent {\bf Case $ 1<p<2$:} If $q=\infty$ let $\beta$ be any number less than 2 and if $q<\infty$ let 
$$\beta= 2-\frac{2}{q}.
$$
\noindent {\bf Case $ p>2$:} If $q=\infty$ let $\beta$ be any number less than $p/(p-1)$ and if $q<\infty$ let 
$$\beta=\frac{p-\frac{2}{q}}{p-1}.$$
\end{definition}
In the theorem below we determine the \emph{optimal} H\"older exponent at least in the case $p>2$ and $2<q<\infty$. It is our main result.

\begin{thm}\label{thm:main} Suppose $\lap_p v = h$ in $\Omega$ and that $h\in L^q(\Omega)$, where $2< q\leq \infty$. Then $\nabla v \in C^{\beta-1}_\text{loc}(\Omega)$.
In particular, for any compact $K\subset\Omega$, we have the estimate
$$
[\nabla v]_{C^{\beta-1}(K)}\leq C(q,p,\beta,K)\max \left(\|h\|_{L^q(\Omega)}^\frac{1}{p-1},\|v\|_{L^\infty(\Omega)}\right).
$$ 
\end{thm}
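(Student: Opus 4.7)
The plan is to prove Theorem~\ref{thm:main} by an iterative affine approximation in which $v$ is compared, at each dyadic scale around a fixed interior point $x_0$, with a $p$-harmonic function, invoking Iwaniec--Manfredi's sharp planar H\"older estimate as a black box. The exponent $\beta$ is dictated by scaling: the map $\tilde v(y)=r^{-\beta}v(x_0+ry)$ satisfies $\lap_p\tilde v(y)=r^{1-(\beta-1)(p-1)}h(x_0+ry)$, and in the main case $p>2$, $q<\infty$ the choice $\beta=(p-2/q)/(p-1)$ is precisely what makes $\|\lap_p\tilde v\|_{L^q(B_1)}=\|h\|_{L^q(B_r(x_0))}$, so that the class of admissible data is invariant under rescaling. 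For $1<p<2$ the claimed exponent $\beta-1$ is above the natural scaling threshold, reflecting the singular (rather than degenerate) character of $\lap_p$ in this range, and that case I would treat separately with a Calder\'on--Zygmund type argument yielding $W^{2,q}_{\mathrm{loc}}$ estimates.

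The first ingredient is an \emph{approximation lemma}: for every $\e>0$ there exists $\delta>0$ such that if $\lap_p v=h$ in $B_1$ with $\|v\|_{L^\infty(B_1)}\le 1$ and $\|h\|_{L^q(B_1)}\le\delta$, then there is a $p$-harmonic $w$ in $B_{3/4}$ with $\|v-w\|_{L^\infty(B_{3/4})}\le\e$. This I would prove by contradiction and compactness: given a counterexample sequence $(v_j,h_j)$, Proposition~\ref{prop:c1kappa} delivers uniform $C^{1,\kappa}$ bounds, hence a $C^1$-convergent subsequence whose limit is $p$-harmonic because $h_j\to 0$ in $L^q$ and the $p$-Laplacian is closed under this convergence.

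The second ingredient is an \emph{iterative affine improvement}: construct $\lambda\in(0,1)$ and affine functions $\ell_k(x)=a_k+b_k\cdot(x-x_0)$ with
$$
\sup_{B_{\lambda^k}(x_0)}|v-\ell_k|\le \lambda^{k\beta},\qquad |b_{k+1}-b_k|\le C\lambda^{k(\beta-1)}.
$$
In the inductive step I rescale $v-\ell_k$ to the unit ball using the scaling from the first paragraph, apply the approximation lemma to obtain a $p$-harmonic $w$ close to the rescaled solution, and use $\kappa_{\text{IM}}>1/(p-1)>\beta-1$ (in the case $p>2$) to conclude that $w$ is approximated by its tangent plane at the origin to order $|x|^{1+\kappa_{\text{IM}}}$; choosing $\lambda$ small in terms of the gap $\kappa_{\text{IM}}-(\beta-1)$ then produces $\ell_{k+1}$ satisfying both required estimates. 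The telescoping bound forces $b_k\to b_\infty=\nabla v(x_0)$ at rate $\lambda^{k(\beta-1)}$, and standard interpolation between consecutive dyadic scales yields $|\nabla v(x_0+x)-\nabla v(x_0)|\le C|x|^{\beta-1}$; tracking constants through the rescaling gives the quantitative bound of the theorem at every interior $x_0$.

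The main obstacle I anticipate is the inductive step when the affine correction is not small: setting $u:=v-\ell_k$, the function $u$ after rescaling solves not the $p$-Poisson equation but the \emph{shifted} equation $\div(|b_k+\nabla u|^{p-2}(b_k+\nabla u))=h$. Either the approximation lemma has to be extended to be stable under such shifts, or the iteration must be bifurcated: in the degenerate regime ($|b_k|$ small relative to $\lambda^{k(\beta-1)}$) one compares with the $p$-harmonic model as above, while in the non-degenerate regime ($|b_k|$ large) one compares with solutions of the linearized uniformly elliptic equation, which are $C^\infty$ by classical theory. The matching of the two regimes, separate handling of $1<p<2$, and the endpoint $q=\infty$ (where $\beta$ is specified only by a strict inequality and the iteration must absorb an arbitrarily small loss in the decay rate) will be the most technical parts.
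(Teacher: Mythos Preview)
Your strategy is a genuine alternative to the paper's, and in fact is closer to the Teixeira--Urbano philosophy that the authors acknowledge as inspiration. The paper does \emph{not} iterate an affine approximation scale by scale. Instead, it runs a single blow-up argument (Lemma~\ref{lem:keylem}): assuming the oscillation bound $S_r\le Cr^\beta$ fails along a sequence, the rescalings $V_j(x)=(v_j(x_j+r_jx)-v_j(x_j))/S_{r_j}$ converge to an entire $p$-harmonic function with controlled growth and a critical point at the origin, which the planar Liouville theorems (Corollaries~\ref{cor:liouville} and~\ref{cor:liouville2}, via Bojarski--Iwaniec quasiregularity of the complex gradient and, for $p<2$, the conjugate function) force to be zero---a contradiction. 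Crucially, the paper never subtracts the full affine part during this step; it works with $v(y)-v(x)$ under the pointwise hypothesis $|\nabla v(x)|\le r^{\beta-1}$, so the rescaled function still solves an honest $p$-Poisson equation and your ``shifted equation'' obstacle never appears. The non-degenerate regime $|\nabla v(x)|>r^{\beta-1}$ is then handled in one shot (Theorem~\ref{thm:mainprop}) by a single rescaling to a uniformly elliptic problem, rather than being woven into the iteration. What your approach would buy is a more constructive argument that avoids contradiction and Liouville; what the paper's approach buys is that the delicate bifurcation you anticipate is replaced by a clean case split \emph{after} the degenerate estimate is already in hand.

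One point in your outline is genuinely shaky: the proposal to treat $1<p<2$ by a Calder\'on--Zygmund argument giving $v\in W^{2,q}_{\mathrm{loc}}$. No such estimate is available as a black box for the singular $p$-Poisson equation; second derivatives of $p$-harmonic-type functions are a delicate issue precisely near critical points, and the literature results (e.g.\ the $L^2$-integrability of $D^2 u$ used implicitly in Lemma~\ref{lem:qreg}) do not extend straightforwardly to $L^q$ for general $q$. The paper handles $1<p<2$ within the same blow-up framework by passing to the conjugate function, which is $p'$-harmonic with $p'>2$, and this is where the planar structure is used a second time. If you pursue your route, you will need a replacement for this step---most likely the same degenerate/non-degenerate bifurcation you sketch for $p>2$, noting that for $p<2$ the non-degenerate regime is where the equation is uniformly elliptic and the degenerate regime (small gradient) is where compactness and comparison with $p$-harmonic functions must carry the argument.
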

It is worth our while to mention that for a bounded right-hand $h$ side we obtain for the estimate
$$
|\nabla v(x)-\nabla v(y)|\leq L_\e|x-y|^{\frac{1}{p-1}-\e},\quad p>2,
$$
for every $\e>0$. It is likely that it also holds when $\e=0$.
Our method of proof is based on universal estimates for the $p$-Laplace equation $\lap_p u =0$, which come from the fact that the complex gradient, $f=u_x-iu_y$ is a quasiregular mapping. For this method it is essential that the right-hand side is zero. A balanced perturbation of the  $p$-Poisson equation leads to the $p$-Laplace equation at the limit so that the universal estimates can be employed.

\emph{Acknowledgments:} This work was written at the Mittag-Leffler Institute. The topic was inspired by a talk of J. M. Urbano concerning \cite{TU13}. G. Mingione has informed us that alternative proofs can be extracted from various estimates in \cite{KM12} and \cite{KM13}. The authors are also truly grateful to J. Lewis for reading the proof at an early stage.

\section{Auxiliary results for the homogeneous equation $\lap_p u=0$}
It was proved by Bojarski and Iwaniec that the complex gradient
$$
f=u_x-iu_y
$$
of a solution to the $p$-Laplace equation $\lap_p u =0$ is a quasiregular mapping; see \cite{BI87}. We need the following consequence of this fundamental result.

\begin{lemma}\label{lem:qreg} Let $p\geq 2$. Suppose $u\in C(B_{2R})\cap W^{1,p}(B_{2R})$ is a solution to $\lap_p u =0$ in the disc $B_{2R}$. Then there is a constant $\Lambda = \Lambda(p)$ such that 
$$  
[\nabla u]_{C^\alpha(B_R)}\leq \frac{\Lambda}{R^{1+\alpha}}\|u\|_{L^\infty(B_{2R})}, 
$$
where $\alpha = \frac{1}{p-1}$.
\end{lemma}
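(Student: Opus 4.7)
The plan is to combine the Bojarski--Iwaniec quasiregularity theorem with the classical Hölder continuity estimate for bounded planar quasiregular mappings, together with a Lipschitz bound for $p$-harmonic functions. Since we already know by Proposition~\ref{prop:c1kappa} that $u \in C^{1,\kappa}_{\text{loc}}$ for some $\kappa>0$, the complex gradient $f = u_x - iu_y$ is continuous, and by \cite{BI87} it is $K$-quasiregular on $B_{2R}$; a direct computation from the $p$-Laplace structure shows that for $p \geq 2$ the distortion is exactly $K = p-1$. This identification of the distortion is the key analytic input of the proof.

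I would then invoke the standard Hölder regularity of bounded $K$-quasiregular mappings in the plane: for $z, w \in B_R$,
$$ |f(z) - f(w)| \leq C(p) \|f\|_{L^\infty(B_{3R/2})} \left(\frac{|z-w|}{R}\right)^{1/K}, $$
with exponent $1/K = 1/(p-1) = \alpha$. This is the classical Morrey-type oscillation-decay estimate for solutions of the Beltrami equation. The Lipschitz norm on the right-hand side is then handled by the interior gradient estimate of Tolksdorf \cite{Tol84} and DiBenedetto \cite{DiB83}:
$$ \|\nabla u\|_{L^\infty(B_{3R/2})} \leq \frac{C(p)}{R}\|u\|_{L^\infty(B_{2R})}. $$
Substituting this into the previous display and recalling that $|f| = |\nabla u|$ yields the stated bound with $\Lambda = \Lambda(p)$. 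As a cosmetic simplification one may reduce to $R = 1$ first, exploiting the fact that $\tilde u(x) := u(Rx)$ is again $p$-harmonic and satisfies $\|\tilde u\|_{L^\infty(B_2)} = \|u\|_{L^\infty(B_{2R})}$, and then restore the correct $R$-powers by scaling.

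The main obstacle is the matching of constants: one must verify that the distortion coming from the Bojarski--Iwaniec computation is precisely $K = p-1$ (and not some larger number), since otherwise the exponent $1/K$ produced by the quasiregular regularity theorem would fall short of $1/(p-1)$. Everything else is classical and dimension-specific to the plane. It is worth remarking that this argument is deliberately non-sharp compared with the Iwaniec--Manfredi exponent recalled in the introduction: quasiregularity uses only the ellipticity of the linearized equation and discards the finer structure of the $p$-Laplacian which the hodograph transform exploits. For the application to the $p$-Poisson equation, however, the cruder exponent $1/(p-1)$ is all that is needed.
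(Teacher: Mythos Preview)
Your argument is correct and rests on the same structural idea as the paper's proof: the Bojarski--Iwaniec quasiregularity of the complex gradient $f=u_x-iu_y$, with Beltrami coefficient $k=(p-2)/p$ and hence dilatation $K=(1+k)/(1-k)=p-1$. The difference lies in how the H\"older exponent is extracted and how the estimate is closed in terms of $\|u\|_{L^\infty}$. You quote the $1/K$-H\"older bound for bounded $K$-quasiregular maps as a black box and then control $\|f\|_{L^\infty}=\|\nabla u\|_{L^\infty}$ via the DiBenedetto--Tolksdorf interior gradient estimate. The paper instead unfolds the H\"older estimate by hand: it proves the Dirichlet-integral decay $I(r)\le I(r_0)(r/r_0)^{2/(p-1)}$ (as in Lemma~12.1 of \cite{GT01}), applies Morrey's lemma, bounds $I(r_0)$ by $\|\nabla u\|_{L^2}^2$ through Mikljukov's inequality, and finally reaches $\|u\|_{L^\infty}$ via the Caccioppoli inequality --- never invoking any $L^\infty$ gradient bound. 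Your route is shorter but leans on the $C^{1,\alpha}$ theory of \cite{DiB83} and \cite{Tol84} as input; the paper's route is more self-contained (only energy estimates), and the intermediate Morrey-type inequality it isolates is re-used verbatim in the proof of Lemma~\ref{lem:qreg2} for $1<p<2$ through the conjugate function, where a direct sup-norm gradient bound on the conjugate would be less convenient.
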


It is of utmost importance that the same $\Lambda$ will do for all solutions $u$. We sketch the proof of this known result.
\begin{proof}[~Sketch of the proof]
 First, by \cite{BI87} the complex gradient $f(z)$, which belongs to $W^{1,2}_\text{loc}(\Omega)$ and is continuous, satisfies the inequality
 $$
 \Big|\frac{\dd f}{\dd \bar z}\Big|\leq \frac{p-2}{p}\Big|\frac{\dd f}{\dd z}\Big|, 
 $$
 a.e. in the $\Omega$. Here it is decisive that $(p-2)/p<1$. As in the proof of Lemma 12.1 in \cite{GT01} it follows that the Dirichlet integral 
 $$
 I(r)=\iint_{B_r}|D f|^2dx dy
 $$
 satisfies the inequality
 $$
 I(r)\leq I(r_0)\left(\frac{r}{r_0}\right)^{2\alpha}, \quad \alpha = \frac{1}{p-1},
 $$
 when $r\leq r_0<2R$. Then Morrey's lemma implies
 \begin{equation}\label{eq:morest}
 |f(z_2)-f(z_1)|\leq 2\left(\frac{|z_2-z_1|}{r_0}\right)^ \alpha\sqrt{\frac{1}{\alpha} I(r_0)}, 
 \end{equation}
 when $|z_2-z_1|\leq r_0<2R$; see Lemma 12.2 in \cite{GT01}.
 
 We also have the standard estimate
 \begin{equation}\label{eq:Iest}
 I(r)=\iint_{B_r} |Df|^2 dx dy \leq \frac{C_1}{r^2}\iint_{B_{\frac{3r}{2}}} |f|^2 dx dy=\frac{C_1}{r^2}\iint_{B_{\frac{3r}{2}}} |\nabla u|^2 dx dy,
 \end{equation}
 for a quasiregular mapping, sometimes called Mikljukov's inequality. There $C_1$ depends on the dilatation $1/(p-1)$, hence only on $p$. Now 
 \begin{align}\label{eq:cacc}\nonumber 
\left( \frac{1}{r^2}\iint_{B_{\frac{3r}{2}}} |\nabla u|^2 dx dy\right)^\frac12&\leq \left( \frac{1}{r^2}\iint_{B_{\frac{3r}{2}}} |\nabla u|^p dx dy\right)^\frac1p\\
\leq \left( \frac{C_2}{r^{p+2}}\iint_{B_{2r}} |u|^p dx dy\right)^\frac1p& \leq \frac{C_2^\frac1p}{r}\|u\|_{L^\infty(B_{2r})},
 \end{align}
 by H\"older's inequality and a standard Caccioppoli estimate. The new constant $C_2$ depends only on $p$. Combining \eqref{eq:Iest} and \eqref{eq:cacc} we arrive at
 $$
 |f(z_2)-f(z_1)|\leq \frac{\Lambda}{r_0} \left(\frac{|z_2-z_1|}{r_0}\right)^\alpha\|u\|_{L^\infty(B_{2r_0})},
 $$
 whenever $|z_2-z_1|\leq r_0<R$. The various constants have been joined in $\Lambda$. This is the desired result.
\end{proof}

The above lemma has the following immediate consequence.
\begin{cor}[Liouville]{\label{cor:liouville}}  Let $p>2$. If $\lap_p u =0$ in $\R^2$ and if 
$$
\|u\|_{L^\infty(B_{R_j})}\leq CR_j^{1+\alpha-\e}, \quad \alpha = \frac{1}{p-1}
$$ 
for some fixed constant $C$, some subsequence $R_j\to\infty$ and $\e>0$, then $\nabla u$ must be constant.
\end{cor}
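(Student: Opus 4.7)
The plan is to deduce the conclusion directly from Lemma \ref{lem:qreg} by passing to the limit along the sequence $R_j$. Fix two arbitrary points $z_1, z_2 \in \R^2$. For all $j$ sufficiently large we have $z_1, z_2 \in B_{R_j/2}$, so we apply Lemma \ref{lem:qreg} with $R$ replaced by $R_j/2$ to obtain
$$
[\nabla u]_{C^\alpha(B_{R_j/2})} \leq \frac{\Lambda}{(R_j/2)^{1+\alpha}} \|u\|_{L^\infty(B_{R_j})}.
$$
This is the only real ingredient; everything else is bookkeeping.

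Combining the above with the hypothesis $\|u\|_{L^\infty(B_{R_j})} \leq C R_j^{1+\alpha-\e}$ yields
$$
|\nabla u(z_2) - \nabla u(z_1)| \leq \frac{\Lambda}{(R_j/2)^{1+\alpha}} \cdot C R_j^{1+\alpha-\e} \cdot |z_2-z_1|^\alpha = C' R_j^{-\e}|z_2-z_1|^\alpha,
$$
where $C'$ depends on $p, C$ but not on $j$. Since $R_j \to \infty$ and $\e>0$, the right-hand side tends to $0$, forcing $\nabla u(z_1) = \nabla u(z_2)$. As $z_1,z_2$ were arbitrary, $\nabla u$ is constant on $\R^2$.

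There is no real obstacle here: the proof is essentially a scaling argument in disguise, and the exponent $1+\alpha$ in the growth condition is sharp precisely because it matches the scaling $R^{-(1+\alpha)}$ produced by Lemma \ref{lem:qreg}. Any strictly slower growth (captured by the $-\e$) is therefore enough to kill the Hölder seminorm in the limit. The role of the assumption $p>2$ is simply to guarantee that Lemma \ref{lem:qreg} applies.
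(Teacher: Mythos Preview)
Your proof is correct and is precisely the argument the paper has in mind when it calls the corollary an ``immediate consequence'' of Lemma~\ref{lem:qreg}: apply the lemma on $B_{R_j}$, insert the growth bound, and let $R_j\to\infty$ to force $[\nabla u]_{C^\alpha}$ to vanish. There is nothing to add.
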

Through the conjugate function we can deduce the following result:
\begin{lemma}\label{lem:qreg2}
 Let $1<p<2$ and suppose that $\lap_p u =0$ in $B_{2R}$ and that $|\nabla u (0)|=0$. Then there is a constant $\Lambda=\Lambda(p)$ such that 
$$
\frac{ | \nabla u(x)|}{|x|}\leq \frac{\Lambda}{R^2}\|u\|_{L^\infty(B_{2R})}
 $$
for any $x\in B_R$.
\end{lemma}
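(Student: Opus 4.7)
The plan is to pass to the \emph{conjugate function} (stream function). Since the disc $B_{2R}$ is simply connected and the equation $\lap_p u = 0$ is exactly the statement that the vector field $(-|\nabla u|^{p-2}u_y,\,|\nabla u|^{p-2}u_x)$ is divergence-free, this field admits a potential $u^*$, determined up to an additive constant, satisfying
$$
\partial_x u^* = -|\nabla u|^{p-2}\partial_y u,\qquad \partial_y u^* = |\nabla u|^{p-2}\partial_x u.
$$
A direct computation from this Cauchy--Riemann-type system shows that $u^*$ solves the conjugate equation $\lap_{p'}u^* = 0$ with $p' = p/(p-1)$, and that $|\nabla u^*| = |\nabla u|^{p-1}$. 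In particular, the hypothesis $|\nabla u(0)| = 0$ becomes $\nabla u^*(0) = 0$.

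Since $1 < p < 2$ forces $p' > 2$, Lemma \ref{lem:qreg} applies to $u^*$ with the exponent $\alpha^{*} = 1/(p'-1) = p-1$. Using $\nabla u^*(0) = 0$, it produces for $x$ in a disc around the origin a pointwise bound of the shape
$$
|\nabla u^*(x)| \leq \frac{C(p)}{R^p}\,\|u^*\|_{L^\infty(B_R)}\,|x|^{p-1}.
$$
It remains to convert the $L^\infty$ bound on $u^*$ into one on $u$.

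For that, normalize $u^*(0) = 0$ and integrate $\nabla u^*$ along radial segments to obtain $\|u^*\|_{L^\infty(B_R)} \leq R\,\|\nabla u^*\|_{L^\infty(B_R)} = R\,\|\nabla u\|_{L^\infty(B_R)}^{p-1}$. Combined with the standard scale-invariant Lipschitz estimate $\|\nabla u\|_{L^\infty(B_R)} \leq C(p)\,R^{-1}\|u\|_{L^\infty(B_{2R})}$ for $p$-harmonic functions, this yields $\|u^*\|_{L^\infty(B_R)} \leq C(p)\,R^{2-p}\,\|u\|_{L^\infty(B_{2R})}^{p-1}$. Plugging back and taking $(p-1)$-th roots produces the required $R^{-2}|x|$ bound. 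The minor discrepancy between the sub-disc where this argument is immediately valid and the full $B_R$ of the statement is absorbed by the elementary observation that for $|x| \geq R/2$ the conclusion already follows directly from the Lipschitz estimate, since $|x|/R$ is then bounded below.

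The main obstacle is the meaning of the conjugate function $u^*$ when $p < 2$: the factor $|\nabla u|^{p-2}$ is singular on the set $\{\nabla u = 0\}$. The divergence-free formulation sidesteps this issue, as the vector field itself has magnitude $|\nabla u|^{p-1}$ and lies in $L^{p'}_{\text{loc}}$, so it defines a potential by path integration in the simply connected disc. The fact that the resulting $u^*$ lies in $W^{1,p'}_{\text{loc}}$ and solves $\lap_{p'}u^* = 0$ weakly, so that Lemma \ref{lem:qreg} may legitimately be invoked, is classical and belongs to the Bojarski--Iwaniec circle of ideas already used in the proof of Lemma \ref{lem:qreg}.
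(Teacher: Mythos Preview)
Your argument is correct, and the overall strategy---pass to the conjugate $p'$-harmonic function and exploit that $p'>2$---is the same as the paper's. The execution differs in one respect worth noting.

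You invoke Lemma~\ref{lem:qreg} as a black box, which hands you a bound involving $\|u^*\|_{L^\infty}$; you then need an extra step (normalizing $u^*(0)=0$, radial integration, and the interior Lipschitz estimate for $u$) to convert this into a bound involving $\|u\|_{L^\infty}$. The paper instead reopens the proof of Lemma~\ref{lem:qreg} and uses the intermediate Morrey-type estimate \eqref{eq:morest}, which bounds $|\nabla u^*(x)|$ in terms of the Dirichlet integral $I(r)=\iint_{B_r}|D^2 u^*|^2$. The point is that after Mikljukov's inequality and H\"older, $I(r)$ is controlled by $\iint |\nabla u^*|^{p'}=\iint |\nabla u|^p$, and now a Caccioppoli estimate for $u$ itself closes the loop directly in terms of $\|u\|_{L^\infty(B_{2R})}$, without ever passing through $\|u^*\|_{L^\infty}$. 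This is a bit more economical: it avoids invoking the Lipschitz bound for $p$-harmonic functions as a separate ingredient and gives the estimate in the full $B_R$ in one stroke, so no annular patching is needed. Your route is more modular, which has its own appeal, and the additional ingredients you call on are standard.
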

\begin{proof} The conjugate function $v$, given by
$$
\left\{\begin{array}{lr}
v_x = -|\nabla u|^{p-2}u_y,\\
v_y = |\nabla u|^{p-2}u_x, 
\end{array}\right.
$$ 
satisfies
$$
\left\{\begin{array}{lr}
\lap_{p'} v  = 0,\\
|\nabla v|^{p'} =|\nabla u|^p,
\end{array}\right.
$$ 
where $1/{p'}+1/p=1$ so that $p'=p/(p-1)>2$, see \cite{AL88} . From \eqref{eq:morest} in the proof of Lemma \ref{lem:qreg} we can read off
$$
 |\nabla v(x)|\leq 2\left(\frac{|x|}{r}\right)^\frac{1}{p'-1}\sqrt{(p'-1) I(r)}, 
 $$
 when $|x|\leq r<2R$. Here
$$
I(r)=\iint_{B_r}|D^2 v|^2 dx dy \leq \frac{C_1}{r^2}\iint_{B_\frac{3r}{2}} |\nabla v|^2 dx dy,
$$
by Mikljukov's inequality. By H\"olders inequality and a standard Caccioppoli estimate it follows that
\begin{align*}
 \sqrt{I(r)}\leq \left(\frac{C_2}{r^2}\iint_{B_\frac{3r}{2}}|\nabla v|^{p'} dx dy\right)^\frac{1}{p'}
 &=\left(\frac{C_2}{r^2}\iint_{B_\frac{3r}{2}}|\nabla u|^p dx dy\right)^\frac{1}{p'}\\
 &\leq \left(\frac{C_3}{r^{2+p}}\iint_{B_{2r}}|u|^p dx dy\right)^\frac{1}{p'}\\
 &\leq C_4\left(\frac{\|u\|_{L^\infty(B_{2r})}}{r}\right)^\frac{p}{p'}.
\end{align*}
Recalling that $|\nabla u|^{p-1}=|\nabla v|$ we obtain
$$
\frac{|\nabla u(x)|}{|x|}\leq \Lambda\frac{\|u\|_{L^\infty(B_{2r})}}{r^2},
$$
for $|x|\leq r<2R$. This implies the desired result.
\end{proof}
As a consequence we also obtain a Liouville-type result when $1<p<2$.
\begin{cor}[Liouville]\label{cor:liouville2} Assume $1<p<2$. If $\lap_p u =0$ in $\R^2$, $|\nabla u(0)|=0$ and 
$$
\|u\|_{L^\infty(B_{R_j})}\leq CR_j^{2-\e},
$$ 
for some fixed constant $C$, some subsequence $R_j\to\infty$ and $\e>0$, then $ u$ must be constant.

\end{cor}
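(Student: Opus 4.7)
The plan is that this is an immediate consequence of Lemma \ref{lem:qreg2}, exactly parallel to how Corollary \ref{cor:liouville} follows from Lemma \ref{lem:qreg}. The hypothesis $|\nabla u(0)|=0$ is there precisely so that Lemma \ref{lem:qreg2} applies, and the growth bound on $\|u\|_{L^\infty(B_{R_j})}$ is one power of $R$ below what the quadratic denominator in that lemma can absorb.

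Concretely, fix any point $x \in \R^2$. For every $R_j$ with $|x| < R_j/2$, I would apply Lemma \ref{lem:qreg2} on the disc $B_{2R}$ with $2R = R_j$, i.e.\ $R = R_j/2$. Since $\lap_p u = 0$ in all of $\R^2$ and $|\nabla u(0)| = 0$, the hypotheses of the lemma are satisfied for every such $R_j$, giving
$$
\frac{|\nabla u(x)|}{|x|} \;\leq\; \frac{\Lambda}{(R_j/2)^2}\,\|u\|_{L^\infty(B_{R_j})} \;\leq\; \frac{4\Lambda\, C\, R_j^{2-\e}}{R_j^{2}} \;=\; 4\Lambda C\, R_j^{-\e}.
$$
Letting $j\to\infty$ along the given subsequence, the right-hand side tends to $0$, so $|\nabla u(x)| = 0$. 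Since $x$ was arbitrary, $\nabla u \equiv 0$ in $\R^2$, and thus $u$ is constant.

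There is no real obstacle here; the work has already been done in Lemma \ref{lem:qreg2}. The only thing to be mildly careful about is book-keeping of the radii (the lemma is stated on $B_{2R}$ with the conclusion on $B_R$, so one uses $R = R_j/2$), and observing that the exponent $2$ in the denominator is precisely what makes the growth condition $R_j^{2-\e}$ sharp for the Liouville conclusion, in the same way that the exponent $1+\alpha$ controls the corresponding statement in Corollary \ref{cor:liouville}.
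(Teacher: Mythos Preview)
Your proof is correct and is precisely the argument the paper has in mind: Corollary~\ref{cor:liouville2} is stated as an immediate consequence of Lemma~\ref{lem:qreg2}, and your application of that lemma with $R=R_j/2$ and passage to the limit along $R_j\to\infty$ is exactly the intended deduction. The only cosmetic point is that the case $x=0$ is handled directly by the hypothesis $|\nabla u(0)|=0$, but this is implicit in your argument.
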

\section{The oscillation of the solution when the gradient is small}
In this chapter we consider the equation
$$
\lap_p v = h\quad \text{in $B_1$}
$$
under the assumptions
$$
\|h\|_{L^q(B_1)}\leq 1,\,q>2,\quad \|v\|_{L^\infty(B_1)}\leq 1.
$$
In this normalized situation, our aim is to prove the following estimate:
\begin{prop}\label{prop:supprop} Let $\beta$ be as in Definition \ref{det:beta}. If the inequality $|\nabla v(x)|\leq r^{\beta-1}$, where $r<1/4$, holds at some fixed point $x\in B_{1/2}$, then
$$
S_r=\sup_{y\in B_r(x)}|v(y)-v(x)|\leq Cr^\beta,
$$
where $C=C(p,q,\beta)$.
\end{prop}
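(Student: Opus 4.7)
I would argue by contradiction, performing a blow-up at an \emph{extremal scale} that produces, in the limit, a $p$-harmonic function on $\R^2$ with subcritical growth, which Corollary \ref{cor:liouville} or \ref{cor:liouville2} forces to vanish, contradicting the normalization.

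Suppose the conclusion fails; then one obtains sequences $v_j$, $h_j$, $x_j \in B_{1/2}$, $r_j \in (0,1/4)$ satisfying the hypotheses but with $S_j := \sup_{B_{r_j}(x_j)}|v_j - v_j(x_j)|$ and $S_j/r_j^\beta \to \infty$. Note $q > 2$ forces $\beta > 1$ in every case of Definition \ref{det:beta}, and $S_j \leq 2$ then forces $r_j \to 0$. Writing $T_j(\rho) := \sup_{B_\rho(x_j)}|v_j - v_j(x_j)|$, I would pick an extremal scale $\rho_j \in [r_j, 1/4]$ so that $M_j := T_j(\rho_j)/\rho_j^\beta$ is at least half of $\sup_{\rho \in [r_j, 1/4]} T_j(\rho)/\rho^\beta$; then $M_j \to \infty$, and since $M_j \rho_j^\beta = T_j(\rho_j) \leq 2$, also $\rho_j \to 0$. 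Consider the rescaled
$$
w_j(y) := \frac{v_j(x_j + \rho_j y) - v_j(x_j)}{T_j(\rho_j)}, \qquad y \in B_{1/(4\rho_j)},
$$
which satisfies $\lap_p w_j = \tilde h_j$ with $\tilde h_j(y) = \rho_j^p\,T_j(\rho_j)^{1-p}\,h_j(x_j + \rho_j y)$. Using the defining identities $\beta(p-1) = p - 2/q$ for $p > 2$ and $\beta = 2 - 2/q$ for $1 < p < 2$, one checks directly that $\sup_{B_1}|w_j| = 1$ and $w_j(0) = 0$; that $\|\tilde h_j\|_{L^q(B_R)} \to 0$ for every fixed $R$; that $|\nabla w_j(0)| \leq M_j^{-1}(r_j/\rho_j)^{\beta-1} \leq M_j^{-1} \to 0$, where the last step uses $\rho_j \geq r_j$ and $\beta > 1$; and by the near-extremality of $\rho_j$ that $\sup_{B_R}|w_j| \leq 2R^\beta$ for every $R \in [1, 1/(4\rho_j)]$.

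By Proposition \ref{prop:c1kappa}, $\{w_j\}$ is uniformly bounded in $C^{1,\kappa}$ on every compact subset of $\R^2$ (for $j$ large enough that the set fits in the domain of $w_j$), and a subsequence converges in $C^1_{\text{loc}}(\R^2)$ to a function $w_\infty$. By weak-solution stability, $\lap_p w_\infty = 0$ in $\R^2$, and the limit inherits $w_\infty(0) = 0$, $\nabla w_\infty(0) = 0$, together with $\sup_{B_R}|w_\infty| \leq 2R^\beta$ for every $R \geq 1$. Because $\beta < p/(p-1)$ when $p \geq 2$ (respectively $\beta < 2$ when $1 < p < 2$), Corollary \ref{cor:liouville} (respectively Corollary \ref{cor:liouville2}) applied with $\varepsilon$ equal to the gap forces $w_\infty \equiv 0$. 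On the other hand, the uniform $C^{1,\kappa}$ bound on $B_2$ yields uniform convergence on $\overline{B_1}$, so $\sup_{B_1}|w_\infty| = \lim_j \sup_{B_1}|w_j| = 1$, a contradiction.

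The hard part is identifying the correct rescaling. A naïve blow-up at the given scale $r_j$ (dividing by $S_j$) produces only a limit on $B_1$, too weak for a Liouville argument; and rescaling at scales below $r_j$ would destroy the gradient bound at the base point, since $r^{\beta-1}$ is increasing in $r$ when $\beta > 1$. The extremal-scale trick resolves both issues: the constraint $\rho_j \geq r_j$ preserves $|\nabla v_j(x_j)| \leq r_j^{\beta-1} \leq \rho_j^{\beta-1}$ after rescaling (and is the place where $\beta > 1$ is essential), while the near-maximality of $M_j(\rho_j)$ produces the subcritical growth $O(R^\beta)$ on the progressively enlarging domain of $w_j$, which is precisely what the Liouville Corollaries are tailored to handle.
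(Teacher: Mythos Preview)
Your argument is correct and rests on the same core idea as the paper's: a blow-up at a scale where the ratio $T(\rho)/\rho^\beta$ is (nearly) maximal, producing in the limit an entire $p$-harmonic function with growth $O(R^\beta)$ and vanishing gradient at the origin, which Corollaries~\ref{cor:liouville} and~\ref{cor:liouville2} then force to vanish, contradicting the normalization $\sup_{B_1}|w_j|=1$.

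The organization, however, differs. The paper proceeds in two steps: first a dichotomy (Lemma~\ref{lem:keylem}) asserting that for each $r$ either $S_r\le Cr^\beta$ or $S_r\le 2^{-k\beta}S_{2^kr}$ for some integer $k\ge 1$, proved by blowing up at the given scale $r_j$ itself (the negation of alternative~(ii) is precisely the statement that $r_j$ is the extremal dyadic scale, so the growth bound $\sup_{B_{2^k}}|V_j|\le 2^{k\beta}$ comes for free); second, an elementary iteration collapses alternative~(ii) into~(i). You bypass this two-step structure by directly selecting a continuous extremal scale $\rho_j\in[r_j,1/4]$ that nearly maximizes $T_j(\rho)/\rho^\beta$, which yields both the growth bound $\sup_{B_R}|w_j|\le 2R^\beta$ and $\rho_j\to 0$ in one stroke. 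The cost is the additional use of $\beta>1$ to pass the gradient smallness from scale $r_j$ up to scale $\rho_j$ via $(r_j/\rho_j)^{\beta-1}\le 1$; the paper's version, blowing up at $r_j$ itself, does not need this. Since $q>2$ makes $\beta>1$ in every case of Definition~\ref{det:beta}, this is harmless, and your one-shot extremal-scale selection is a clean alternative to the dichotomy-plus-iteration packaging.
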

The difficulty is that the gradient constraint is only assumed at the point $x$, otherwise the result would be trivial. The proof is based on rescaled functions and a blow-up argument. At the end, the limiting function turns out to be a solution of the $p$-Laplace equation in the whole plane, which satisfies the Liouville theorem. We begin with the key lemma.

\begin{lemma}\label{lem:keylem} Assume the hypotheses of Proposition \ref{prop:supprop}. Then there is a constant $C=C(p,q,\beta)$ such that for every fixed $r\in (0,1/4)$, at least one of the following alternatives hold:
\begin{enumerate}
\item[(i)]
$S_r=\displaystyle\sup_{y\in B_r(x)}|v(y)-v(x)|\leq Cr^\beta,$
\item[(ii)] There is an integer $k\geq 1$ such that $2^kr<1/4$ and $S_r\leq 2^{-k\beta}S_{2^kr}$.
\end{enumerate}
\end{lemma}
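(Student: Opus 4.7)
The plan is a proof by contradiction combined with a blow-up/rescaling whose limit lies in the scope of the Liouville corollaries of Section 2. Suppose the lemma fails. Then for each integer $j\geq 1$ one can exhibit data $v_j,h_j,x_j,r_j$ satisfying the normalization and the gradient hypothesis $|\nabla v_j(x_j)|\leq r_j^{\beta-1}$, yet violating both alternatives, i.e.
\begin{equation*}
S_{r_j}>j\,r_j^\beta \qquad\text{and}\qquad S_{2^k r_j}<2^{k\beta}\,S_{r_j}\ \text{ for every admissible } k\geq 1.
\end{equation*}
Since $S_{r_j}\leq 2\|v_j\|_{L^\infty}\leq 2$, the first inequality forces $r_j\to 0$.

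The natural rescaling is
\begin{equation*}
w_j(y):=\frac{v_j(x_j+r_j y)-v_j(x_j)}{S_{r_j}},\qquad y\in B_{1/(2r_j)}.
\end{equation*}
A direct computation gives $\lap_p w_j=H_j$ in its domain with $H_j(y)=(r_j^p/S_{r_j}^{p-1})\,h_j(x_j+r_j y)$, hence
\begin{equation*}
\|H_j\|_{L^q}\leq \frac{r_j^{\,p-2/q}}{S_{r_j}^{p-1}}\leq\frac{1}{j^{p-1}}\,r_j^{\,p-2/q-\beta(p-1)}.
\end{equation*}
Here the definition of $\beta$ is tailored exactly so that the exponent $p-2/q-\beta(p-1)$ is nonnegative (zero in the main case $p>2$, $q<\infty$), which forces $\|H_j\|_{L^q}\to 0$. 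The violated alternative (ii) simultaneously provides polynomial growth $\sup_{B_{2^k}}|w_j|\leq 2^{k\beta}$ at every dyadic scale up to $1/(4r_j)$, while $w_j(0)=0$, $\sup_{B_1}|w_j|=1$, and the gradient hypothesis gives $|\nabla w_j(0)|\leq r_j^\beta/S_{r_j}\leq 1/j$.

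With uniform $L^\infty$ control on every fixed ball and a vanishing right-hand side in $L^q$, Proposition \ref{prop:c1kappa} supplies uniform $C^{1,\kappa}_{\mathrm{loc}}$ estimates for $\{w_j\}$. A diagonal subsequence converges in $C^1_{\mathrm{loc}}(\R^2)$ to an entire function $w$; passing to the limit in the weak formulation yields $\lap_p w=0$ in $\R^2$. By uniform convergence one has $w(0)=0$, $\sup_{B_1}|w|=1$, $\nabla w(0)=0$, and $\sup_{B_R}|w|\leq C R^\beta$ for all $R\geq 1$.

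It remains to apply the appropriate Liouville theorem. If $p>2$, then $\beta<p/(p-1)=1+\alpha$, so Corollary \ref{cor:liouville} (with $\e:=1+\alpha-\beta>0$) forces $\nabla w$ to be constant; combined with $\nabla w(0)=0$ this makes $w$ constant, contradicting $\sup_{B_1}|w|=1$. If $1<p<2$, then $\beta<2$ and $|\nabla w(0)|=0$, so Corollary \ref{cor:liouville2} (with $\e:=2-\beta>0$) applies directly and again $w$ is constant. The main obstacle is the exponent bookkeeping in the bound on $\|H_j\|_{L^q}$: one must check that the quantitative failure of alternative (i) supplies exactly enough smallness in $1/S_{r_j}^{p-1}$ to absorb the scaling factor $r_j^{p-2/q}$, which is precisely what dictates the form of $\beta$ in Definition \ref{det:beta}.
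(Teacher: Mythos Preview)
Your proof is correct and follows essentially the same approach as the paper: an indirect argument producing sequences $v_j,x_j,r_j$ that violate both alternatives, the rescaling $w_j(y)=(v_j(x_j+r_jy)-v_j(x_j))/S_{r_j}$, the verification that the rescaled right-hand side vanishes in $L^q$ thanks to the choice of $\beta$, uniform $C^{1,\kappa}_{\mathrm{loc}}$ compactness from Proposition~\ref{prop:c1kappa}, and finally the Liouville corollaries applied to the entire $p$-harmonic limit with $\nabla w(0)=0$ to reach a contradiction with $\sup_{B_1}|w|=1$. The only cosmetic difference is that the paper separates the $q=\infty$ case explicitly, whereas you absorb it into the single estimate $\|H_j\|_{L^q}\le j^{-(p-1)}r_j^{\,p-2/q-\beta(p-1)}$ with the convention $2/q=0$; both versions yield a nonnegative exponent in every case.
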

\begin{proof}
The proof is indirect, starting from the antithesis that no constant $C$ will ever do. Thus, giving $C$ the successive values $j=1,2,3,\ldots$, we can select solutions $v_j$, radii $r_j<1/4$ and points $x_j\in B_{1/4}$ so that the three conditions
\begin{enumerate}
\item[1)] $\displaystyle S_{r_j}=\sup_{y\in B_{r_j}(x_j)}|v_j(y)-v_j(x_j)|\geq jr_j^\beta$,
\item[2)] $S_{r_j}\geq 2^{-k\beta}S_{2^kr_j}$ for all integers $k$ such that $2^kr_j<1/4$, or $r_j\geq\frac18$,
\item[3)] $|\nabla v_j(x_j)|\leq r_j^{\beta-1}$,
\end{enumerate}
all hold. By 1) and the assumed bound on $v_j$, we have $jr_j^\beta\leq 2$, which forces $r_j\to 0$, as $j\to\infty$. This excludes the alternative $r_j\geq 1/8$ in 2). Notice that 2) is perfectly designed for iteration. We define the rescaled functions
$$
V_j(x)=\frac{v_j(x_j+r_jx)-v_j(x_j)}{S_{r_j}}, \quad j=1,2,3,\ldots
$$
which, as we will see, solve a $p$-Poisson equation. By the chain rule
$$
\nabla V_j(x)=\frac{r_j}{S_{r_j}}\nabla v_j(y)\Big|_{y=x_j+r_jx}.
$$
The following properties are now immediate:
$$
\left\{\begin{array}{lr}
 V_j(0)=0,\\
 |\nabla V_j(0)|=\frac{r_j}{S_{r_j}}|\nabla v_j(x_j)|\leq \frac{r_j^\beta}{S_{r_j}}\leq \frac{1}{j}\to 0,\\
 \sup_{B_{2^k}} |V_j|=\frac{S_{2^kr_j}}{S_{r_j}}\leq 2^{k\beta},\text{ for all integers $k$ such that $2^k<\frac{1}{4r_j}$}, \\
 \lap_p V_j(x)=\frac{r_j^p}{S_{r_j}^{p-1}}h(x_j+r_jx)\equiv H_j(x), \text{ when }|x|<\frac{1}{4r_j}.
\end{array}\right.
$$
In particular, the rescaled functions $V_j$ solve a $p$-Poisson equation in the disc $|x|<1/(4r_j)$, which is expanding to the whole plane as $j\to \infty$. Note that the use of second derivatives can be totally avoided if one just writes the equations in their weak form, using test functions under the integral sign.

Recall that $2<q\leq \infty$. We need to treat the case $q=\infty$ separately in the following formal computations.\\

\noindent {\bf Case $q=\infty$:} Now $p-\beta(p-1)>0$ and thus for any $R>0$ we have
\begin{align*}
\|\lap_p V_j\|_{L^\infty(B_R)}&=\frac{r_j^p}{S_{r_j}^{p-1}}\|h\|_{L^\infty(B_{Rr_j}(x_j))}\\
&\leq \frac{r_j^p}{(j r_j^\beta)^{p-1}}\to 0,
\end{align*}
as $j\to\infty$, since sooner or later $Rr_j<1/2$, as required.\\

\noindent {\bf Case $q<\infty$:} Now $q(p-\beta(p-1))-2\geq 0$ and 
\begin{align*}
\|\lap_p V_j\|_{L^q(B_R)}^q&=\frac{r_j^{pq}}{S_{r_j}^{(p-1)q}}\int_{B_R}|h(x_j+r_jx)|^q\,dx\\
&=\frac{r_j^{pq-2}}{S_{r_j}^{(p-1)q}}\int_{B_{Rr_j}(x_j)}|h(y)|^q\,dy\leq \frac{r_j^{pq-2}}{S_{r_j}^{(p-1)q}}\\
&\leq \frac{r_j^{q(p-\beta(p-1)-2}}{(j)^{(p-1)q}}\to 0, 
\end{align*}
as $j\to\infty$, since as above, $Rr_j<1/2$ sooner or later, as required.

Now we go back to the equation for the $V_j$s:
$$
\lap_p V_j = H_j.
$$
In order to be able to pass to the limit as $j\to\infty$, we need some compactness. We recall Proposition \ref{prop:c1kappa} in the introduction. It yields an estimate of the form
\begin{equation}
\label{eq:vjuni}
\| V_j\|_{C^{1,\kappa}(B_\frac{R}{2})}\leq A(p,q,R,\|H_j\|_{L^q(B_R)}, \|V_j\|_{L^\infty(B_R)}), 
\end{equation}
for some $\kappa=\kappa(p,q)$. Recall also that
$$
\|V_j\|_{L^\infty(B_{2^k})}\leq 2^{\beta k}
$$
and that
$$
\|H_j\|_{L^q(B_R)}<1, \text{ for $j$ large enough}.
$$
Thus, the bound in \eqref{eq:vjuni} is uniform in $j$. It follows that, up to extracting a subsequence, $V_j$ converges locally uniformly in $C^{1,\kappa/2}(\R^2)$ to some limit function $V$. The limit function inherits many properties. We obtain that
$$
\left\{\begin{array}{lr}
V(0)=0,\quad |\nabla V(0)|=0,\\
\sup_{B_{2^k}}|V|\leq 2^{k\beta}\text{ for all integers $k\geq 1$},\\
\sup_{B_1}|V|=1,\\
\lap_p V = 0\text{ in $\R^2$}.
\end{array}\right.
$$
Thus, $V$ is an entire solution of the $p$-Laplace equation and Liouville's theorem applies. Since in any case $\beta<2$ if $1<p<2$ and $\beta<p/(p-1)$ if $p>2$, it follows from Corollary \ref{cor:liouville} and Corollary \ref{cor:liouville2} with $R_j=2^j$, that $\nabla V$ reduces to a constant. Thus, $V$ is an affine function and since $V(0)=|\nabla V(0)|=0$, we must have $v\equiv 0$. This contradicts the fact that 
$$
\sup_{B_1} |V| =1.
$$
We conclude that the antithesis is false. The lemma follows.
\end{proof}
In order to prove Proposition \ref{prop:supprop} we have to show that the first alternative in Lemma \ref{lem:keylem} is always valid.

\begin{proof}[~Proof of Proposition \ref{prop:supprop}] If alternative (i) holds for all $r<1/4$ we are done. If not, we pick a radius $r$ for which, by alternative (ii), 
$$
S_r\leq 2^{-k_1\beta}S_{2^{k_1}r},
$$
for some integer $k_1$ with $2^{k_1}r<1/4$. If (i) holds for $S_{2^{k_1}r}$, then
$$
S_{r}\leq 2^{-k_1\beta}S_{2^{k_1}r}\leq 2^{-k_1\beta}C(2^{k_1}r)^\beta=Cr^\beta
$$
and again we are done. If not, we continue with 
$$
S_{2^{k_1}r}\leq 2^{-k_2\beta}S_{2^{k_2}2^{k_1}r}, 
$$
where $2^{k_1}2^{k_1}r<1/4$. Iterating this as long as alternative (i) fails, we obtain
$$
S_r\leq 2^{-k_n\beta}\cdots 2^{-k_1\beta}S_{2^{k_n}\cdots 2^{k_1}r}=2^{-(k_1+\cdots +k_n)\beta}S_{2^{k_1+\cdots +k_n} r}, 
$$
where $2^{k_1+\cdots +k_n} r<1/4$. Since every $k_j\geq 1$, the procedure must stop after a finite number of steps (depending on $r$), at its latest when 
$$2^{k_1+\cdots+ k_n}r\geq \frac18 .
$$
Then the alternative (i) holds for the radius $2^{k_1+\cdots +k_n} r$ and so, finally, 
$$
S_r\leq 2^{-(k_1+\cdots+ k_n)\beta}S_{2^{k_1+\cdots+ k_n} r}\leq 2^{-(k_1+\cdots+ k_n)\beta}C(2^{k_1+\cdots+ k_n} r)^\beta= Cr^\beta.
$$
This proves the claim.
\end{proof}
\section{Proof of the main theorem}
We are now ready to give the proof of our main result. The idea is that when the gradient is small, we can apply the result of the previous section to obtain the desired estimates. On the other hand, when the gradient is large, then the equation becomes non-degenerate so that classical results apply. We first prove the following intermediate result.
\begin{thm}\label{thm:mainprop} Let $\beta$ be as in Definition \ref{det:beta}. Assume that 
$$
\lap_p  v = h \text{ in $B_1$},\quad \|v\|_{L^\infty(B_1)}\leq 1,\quad \|h\|_{L^q(B_1)}\leq 1,
$$
for some $q>2$. Then for any $x\in B_{1/2}$, 
\begin{equation}\label{eq:betaest}
\sup_{B_r(x)}|v(y)-v(x)-(y-x)\cdot \nabla v(x)|\leq Lr^\beta, 
\end{equation}
when $0<r<1/4$ and where $L=L(p,q,\beta)$.
\end{thm}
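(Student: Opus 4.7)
The plan is to split on whether the gradient at the center point $x$ is ``small'' or ``large'' compared to $r^{\beta-1}$, reducing the small-gradient case directly to Proposition \ref{prop:supprop} and reducing the large-gradient case, via a well-chosen rescaling, to classical Schauder theory for the non-degenerate quasilinear equation.

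If $|\nabla v(x)|\leq r^{\beta-1}$, Proposition \ref{prop:supprop} gives $S_r\leq Cr^\beta$, while the linear correction $(y-x)\cdot\nabla v(x)$ contributes at most $r\cdot r^{\beta-1}=r^\beta$; these combine to the claim with $L=C+1$.

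If instead $|\nabla v(x)|>r^{\beta-1}$, set $\rho = |\nabla v(x)|^{1/(\beta-1)}>r$, and, under the temporary assumption that $\rho$ is smaller than some universal $\rho_0<1/4$, introduce
$$
w(y) = \frac{v(x+\rho y) - v(x)}{\rho^\beta}, \qquad y \in B_1.
$$
A direct computation gives $|\nabla w(0)| = \rho^{1-\beta}|\nabla v(x)| = 1$, and $w$ satisfies $\lap_p w = \tilde h$ with $\tilde h(y) = \rho^{p-\beta(p-1)} h(x+\rho y)$. The exponent $p-\beta(p-1)$ is selected by Definition \ref{det:beta} precisely so that $\|\tilde h\|_{L^q(B_1)}\leq 1$ in each of the four sub-cases --- it equals $2/q$ when $p>2$, $q<\infty$ (giving the bound with equality after unscaling), and is strictly positive in every other case. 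Applying Proposition \ref{prop:supprop} at $x$ with radius $\rho$ (legitimate since $|\nabla v(x)|=\rho^{\beta-1}$) also yields $\|w\|_{L^\infty(B_1)}\leq C$. Since $|\nabla w(0)|=1$, the equation for $w$ is uniformly elliptic in a fixed neighborhood of the origin with constants depending only on $p$, and standard Schauder / Calder\'on--Zygmund theory for non-degenerate equations with $L^q$ data produces
$$
\sup_{B_s(0)} |w(y)-w(0)-y\cdot\nabla w(0)| \leq L\, s^\beta
$$
for all $s\leq s_0$. Taking $s=r/\rho\in(0,1)$ and unscaling recovers the claimed estimate on $B_r(x)$. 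The remaining case $\rho\geq\rho_0$ means $|\nabla v(x)|$ is bounded below by a universal constant, so by the $C^{1,\kappa}$ continuity of $\nabla v$ coming from Proposition \ref{prop:c1kappa} the gradient stays bounded below in a universal neighborhood of $x$, and the classical non-degenerate Schauder estimate applies directly to $v$ without any rescaling.

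The main obstacle is verifying that the Schauder constants in the non-degenerate regime are genuinely uniform: the rescaling is specifically designed to normalize simultaneously $|\nabla w(0)|=1$ and $\|\tilde h\|_{L^q(B_1)}\leq 1$, so that the resulting constant $L$ depends only on $p,q,\beta$. One must also confirm that the target exponent $\beta-1$ does not exceed the $C^{1,\alpha}$ exponent $1-2/q$ afforded by planar uniformly elliptic equations with $L^q$ data --- which does hold by Definition \ref{det:beta}, with equality when $1<p<2$ and strict inequality when $p>2$. A minor matching point is to ensure the threshold $\rho_0$ and the radius $s_0$ in the Schauder step are chosen compatibly so that the two cases join seamlessly.
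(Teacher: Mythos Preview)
Your outline is essentially the paper's own proof: the same small/large-gradient dichotomy at threshold $r^{\beta-1}$, the same rescaling $w(y)=(v(x+\rho y)-v(x))/\rho^\beta$ with $\rho=|\nabla v(x)|^{1/(\beta-1)}$, the same use of Proposition~\ref{prop:supprop} at radius $\rho$ to bound $\|w\|_{L^\infty(B_1)}$, and the same passage to $C^{1,1-2/q}$ estimates for a uniformly elliptic equation once $|\nabla w|$ is bounded below near the origin.

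Two steps you leave implicit are precisely what the paper spells out, and you should too. First, the assertion that ``the equation for $w$ is uniformly elliptic in a \emph{fixed} neighborhood of the origin'' is not automatic from $|\nabla w(0)|=1$ alone: one must first apply Proposition~\ref{prop:c1kappa} \emph{to $w$} (legitimate because you have arranged $\|w\|_{L^\infty(B_1)}\le C$ and $\|\tilde h\|_{L^q(B_1)}\le 1$) to obtain a uniform $C^{1,\kappa}$ bound on $\nabla w$, and only then can a radius $\tau$ with $\osc_{B_\tau}\nabla w<\tfrac12$ be fixed independently of $v$. Second, your Schauder step yields the bound only for $s\le s_0$, whereas you then take $s=r/\rho$ ranging over all of $(0,1)$; the intermediate range $s_0\rho<r<\rho$ is a genuine seam that the paper closes separately by combining the already-established estimate at radius $\rho$ with $(\rho/r)^\beta\le (2/\tau)^\beta$. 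Your extra trichotomy on $\rho\gtrless\rho_0$ is not in the paper (which tacitly uses that $\rho$ is bounded via Proposition~\ref{prop:c1kappa}) and carries an analogous seam in the case $\rho\ge\rho_0$, since the non-degenerate region around $x$ need not cover all radii up to $1/4$; this is handled by the same trivial device, but should be said.
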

\begin{proof} Fix $x\in B_{1/2}$. If $|\nabla v(x)|\leq r^{\beta-1}\leq\left(\frac14\right)^{\beta-1}$, then by Proposition \ref{prop:supprop},
\begin{equation}\label{eq:levelest}
\sup_{B_r(x)}|v(y)-v(x)-(y-x)\cdot \nabla v(x)|\leq Cr^\beta+r\cdot r^{\beta-1}=(C+1)r^\beta,
\end{equation}
where $C$ depends on $p$, $q$ and $\beta$. We need the estimate also for $r^{\beta-1}<|\nabla v(x)|$. To this end, let $\rho =|\nabla v(x)|^\frac{1}{\beta-1}>0$ and 
$$
 w(y)=\frac{v(x+\rho y)-v(x)}{\rho^\beta},
$$
so that  $\nabla w(y)=\rho^{1-\beta}\nabla v(x+\rho y)$ and
$$
\sup_{B_1}|w(y)|=\rho^{-\beta}\sup_{B_\rho(x)}|v(y)-v(x)|\leq C,
$$
since $\rho$ is the largest radius for which Proposition \ref{prop:supprop} is available. Moreover, by calculation
\begin{align*}
\|\lap_p w\|_{L^q(B_1)}&\leq \|h\|_{L^q(B_\rho(x))}\rho^{p-\beta(p-1)-\frac{2}{q}}\\
&\leq \|\nabla v \|_{L^\infty(B_\frac12)}^{p-\beta(p-1)-\frac{2}{q}}\leq D=D(p,q,\beta),
\end{align*}
by Proposition \ref{prop:c1kappa}. We once more apply Proposition \ref{prop:c1kappa} to obtain the estimate
$$
\|w\|_{C^{1,\kappa}(B_\frac12)}\leq A=A(p,q,\beta),\quad \kappa=\kappa(p,q).
$$
Therefore we can fix a small radius $\tau=\tau(p,q,\beta)$ so that 
$$
\osc_{B_\tau}\left(\nabla w\right)<\frac12.
$$
Since 
$$|\nabla w(0)|=\rho^{1-\beta}\underbrace{|\nabla v(x)|}_{\rho^{\beta-1}}=1,
$$
we must have $|\nabla w(y)|>1/2$ in $B_\tau$. Thus $w$ solves an equation which is uniformly elliptic with uniformly H\"older continuous coefficients in $B_\tau$. Recall also that $|w|\leq C$ in $B_1$ and hence also in $B_\tau$. Then, from Theorem 9.11 in \cite{GT01} and the Sobolev embedding, there are uniform $C^{1,\gamma}$-estimates available for $w$ with $\gamma =1-2/q$, so that
$$
\| w\|_{C^{1,\gamma}(B_{\tau/2 })}\leq B=B(p,q,\beta).
$$
In particular
$$
\sup_{y\in B_s}|w(y)-w(0)-(y-0)\cdot \nabla w(0)|\leq B|y-0|^{2-\frac2q}, 
$$
when $s<\tau/2$. In terms of $v$ this means
$$
\sup_{y\in B_s}\Big|\frac{v(x+\rho y)-v(x)}{\rho^\beta}-y\cdot \rho^{1-\beta}\nabla v(x)\Big|\leq B|y|^{2-\frac2q}.
$$
Write $z=x+\rho y$ and recall that $\beta\leq 2-\frac2q$. Then the above estimate reads
$$
\sup_{z\in B_{s\rho}}|v(z)-v(x)-(z-x)\cdot\nabla  v(x)|\leq B|y|^{2-\frac2q}\rho^\beta=B(s\rho)^\beta s^{2-\beta-\frac2q}\leq B(s\rho)^\beta,
$$
whenever 
$$
r=s\rho<\frac{\tau \rho}{2}=\frac{\tau}{2}|\nabla v(x)|^\frac{1}{\beta-1}.
$$
This is the same as saying that
$$
\sup_{z\in B_{r}}|v(z)-v(x)-(z-x)\cdot \nabla v(x)|\leq B|y|^{2-\frac2q}\rho^\beta=B(s\rho)^\beta s^{2-\beta-\frac2q}\leq Br^\beta,
$$
whenever 
$$
r<\frac{\tau \rho}{2}=\frac{\tau}{2}|\nabla v(x)|^\frac{1}{\beta-1}.
$$
It remains to verify estimate \eqref{eq:betaest} also when $r$ is in the interval $\tau\rho/2<r<\rho$. This is easy. Take such an $r$. Then estimate \eqref{eq:levelest} is available for the radius $\rho$ and we obtain
\begin{align*}
\sup_{z\in B_{r}}|v(z)-v(x)-(z-x)\cdot\nabla  v(x)|&\leq \sup_{z\in B_{\rho}}|v(z)-v(x)-(z-x)\cdot\nabla  v(x)|\\
&\leq (C+1)r^\beta\left(\frac{\rho}{r}\right)^\beta\leq \left(\frac{2}{\tau}\right)^\beta (C+1)r^\beta.
\end{align*}
Hence, we finally obtain the estimate \eqref{eq:betaest} for all $r<1/4$ with the constant
$$
L=\max \left(C+1,(C+1)\left(\frac{2}{\tau}\right)^\beta,B\right),
$$
which only depends on $p$, $q$ and $\beta$.
\end{proof}

We now conclude the proof of our main result.
\begin{proof}[~Proof of Theorem \ref{thm:main}] It is enough to prove the result for $\Omega = B_1$ and $K=B_{1/4}$. The normalized function
$$
\tilde v = \frac{v}{\max \left(\|v\|_{L^\infty(B_1)},\|h\|_{L^q(B_1)}^\frac{1}{p-1}\right)},
$$
satisfies the assumption of Theorem \ref{thm:mainprop}. Hence, the esimate \eqref{eq:betaest} holds true for $\tilde v$. Then the same estimate holds true for $v$ with $L$ replaced by 
$$
L\max \left(\|v\|_{L^\infty(B_1)},\|h\|_{L^q(B_1)}^\frac{1}{p-1}\right).
$$
This implies immediately
$$
|\nabla v(x)-\nabla v(y)|\leq 2 L\max \left(\|v\|_{L^\infty(B_1)},\|h\|_{L^q(B_1)}^\frac{1}{p-1}\right)|x-y|^{\beta -1}, 
$$
whenever $x,y\in B_{1/4}$. This ends the proof of the theorem.
\end{proof}
 \bibliographystyle{amsrefs}
\bibliography{ref}
\end{document}